\numberwithin{equation}{section}
\newcommand{\be}{\begin{eqnarray}}
\newcommand{\mE}{\end{eqnarray}}
\newcommand{\ce}{\begin{eqnarray*}}
\newcommand{\de}{\end{eqnarray*}}
\newtheorem{theorem}{Theorem}[section]
\newtheorem{lemma}[theorem]{Lemma}
\newtheorem{remark}[theorem]{Remark}
\newtheorem{definition}[theorem]{Definition}
\newtheorem{proposition}[theorem]{Proposition}
\newtheorem{example}[theorem]{Example}
\newtheorem{corollary}[theorem]{Corollary}
\def\eps{\varepsilon}
\def\p{\partial}
\def\[{{\Big[}}
\def\]{{\Big]}}
\def\<{{\langle}}
\def\>{{\rangle}}
\def\({{\Big(}}
\def\){{\Big)}}
\def\bx{{\mathbf{x}}}
\def\dif{{\mathord{{\rm d}}}}
\def\no{\nonumber}
\def\={&\!\!=\!\!&}
\def\bt{\begin{theorem}}
\def\et{\end{theorem}}
\def\bl{\begin{lemma}}
\def\el{\end{lemma}}
\def\br{\begin{remark}}
\def\er{\end{remark}}
\def\bd{\begin{definition}}
\def\ed{\end{definition}}
\def\bp{\begin{proposition}}
\def\ep{\end{proposition}}
\def\bc{\begin{corollary}}
\def\ec{\end{corollary}}
\def\bx{\begin{example}}
\def\ex{\end{example}}
\def\cF{{\mathcal F}}
\def\cI{{\mathcal I}}
\def\cM{{\mathcal M}}
\def\mE{{\mathbb E}}
\def\mH{{\mathbb H}}
\def\mI{{\mathbb I}}
\def\mL{{\mathbb L}}
\def\mN{{\mathbb N}}
\def\mP{{\mathbb P}}
\def\mR{{\mathbb R}}
\def\sF{{\mathscr F}}
\def\sH{{\mathscr H}}
\def\sL{{\mathscr L}}
\def\geq{\geqslant}
\def\leq{\leqslant}
\begin{document}
	\title{Strong solutions of stochastic differential equations with coefficients in mixed-norm spaces}
	
	\date{}

	\author{Chengcheng Ling \,\, and\,\, Longjie Xie}
	
	\address{Chengcheng Ling: School of science, Beijing Jiaotong University, Beijing 100044, China\\
    Fakult\"at f\"ur Mathematik, Universit\"at Bielefeld, Bielefeld,  33615, Germany\\
    Email: chengchenglcc@126.com
	}

	\address{Longjie Xie:
		School of Mathematics and Statistics, Jiangsu Normal University,
		Xuzhou, Jiangsu 221000, P.R.China\\
		Email: longjiexie@jsnu.edu.cn
	}

	\thanks{Research of C. Ling is  supported  by the DFG through the IRTG 2235. L. Xie is supported by the Alexander-von-Humboldt foundation and NNSF of China (No. 11701233, 11931004).}
	
	\begin{abstract}
		By studying parabolic equations in mixed-norm spaces, we  prove the existence and uniqueness of strong solutions
		to stochastic differential  equations driven by Brownian motion with coefficients in spaces with mixed-norm, which extends Krylov and
		R\"ockner's result in \cite{Kr-Ro} and Zhang's result in \cite{Zh1}.
		
		\bigskip
		
		\noindent {{\bf AMS 2010 Mathematics Subject Classification:} 60H10, 60J60.}
		
		\noindent{{\bf Keywords and Phrases:} stochastic differential equations; Zvonkin's transformation; Krylov estimate; singular drift.}
	\end{abstract}
	
	\maketitle

\section{Introduction and main result}

Consider the following stochastic differential equation (SDE for short):
\begin{align}\label{sde00}
\dif X_t=b(t,X_t)\dif t+\dif W_t,\quad X_0=x\in \mR^d,
\end{align}
where $d\geq1$, $b: \mR_+\times\mR^d\rightarrow\mR^d$ is a Borel measurable function, and $(W_t)_{t\geq0}$ is a standard Brownian motion defined on some filtered  probability space $(\Omega,\sF,(\sF_t)_{t\geq 0},\mP)$. It is a classical result that if the coefficient $b$ is global Lipschitz continuous in $x$ uniformly with respect to $t$, then there exists a unique strong solution $(X_t(x))_{t\geq0}$ to SDE (\ref{sde00}) for every $x\in\mR^d$. However, many important applications of this class of SDE show that the Lipschitz continuity imposed on the coefficient is a rather severe restriction. Thus a lot of attentions have been paid to seek a strong solution for (\ref{sde00}) under weaker assumptions on the drift $b$.
A remarkable result due to Zvonkin \cite{Z} showed that if $d=1$ and $b$ is bounded, then SDE (\ref{sde00}) admits a unique strong solution for each
$x\in\mR$. Zvonkin's result was then extended to the multidimensional case by Veretennikov \cite{Ve}.
A further generalization was obtained by Krylov and R\"ockner \cite{Kr-Ro} where the pathwise uniqueness  for SDE (\ref{sde00}) was shown when
\begin{align}
b\in L^q_{loc}(\mR_+;L^p_{loc}(\mR^d))\quad\text{with}\quad q,p\in(2,\infty)\quad\text{and}\quad  d/p+2/q<1.    \label{pq}
\end{align}
Later, Zhang \cite{Zh1} extends these results to SDE driven by multiplicative noise
\begin{align}\label{sde1}
\dif X_t=b(t,X_t)\dif t+\sigma(t,X_t)\dif W_t,\quad X_0=x\in \mR^d
\end{align}
under the assumptions that $\sigma:\mR_+\times\mR^d\rightarrow\mR^d\otimes\mR^d$ is a bounded, uniformly elliptic matrix-valued function which is uniformly continuous in $x$ locally uniformly with respect to $t$, and
$$
b, |\nabla\sigma|\in L^q_{loc}(\mR_+;L^p_{loc}(\mR^d))
$$
with $p,q$ satisfying (\ref{pq}). Here and below, $\nabla$ denotes the weak derivative with respect to the  $x$ variable. Note that when $\sigma\equiv0$, SDE (\ref{sde1}) is just an ordinary differential equation, which is far from being well-posed under the above conditions on the drift coefficient. This reflects that noises can play some regularization effects to the deterministic systems, we refer the readers to \cite{F,Gu} for more comprehensive overview. From then on, there are also increasing interests of studying the  properties of the unique strong solution to SDE (\ref{sde1}) with singular coefficients, see e.g. \cite{Fe-Fl-1, M-N-P-Z,Wa,XZ2,Zh4} and the references therein.

\vspace{2mm}
To the best of our knowledge, the conditions imposed on the coefficients  in \cite{Kr-Ro,Zh1} are known to be the weakest  so far in the literature to ensure the strong well-posedness of the SDE (\ref{sde00}) and (\ref{sde1}).  However, there still exists a gap: the assumptions are not well-unified. Let us specify this point by a simple example. Consider the following SDE  in $\mR^{2}$:
\begin{equation} \label{sde0}
\left\{ \begin{aligned}
&\dif X^1_t=b_1(t,X^1_t)\dif t+\dif W^1_t,\qquad X^1_0=x_1\in\mR,\\
&\dif X_t^2=b_2(t,X^2_t)\dif t+\dif W^2_t,\qquad X^2_0=x_2\in\mR.
\end{aligned} \right.
\end{equation}
If we denote by $x:=(x_1,x_2)^T\in\mR^{2}$,
$X_t:=(X^1_t,X^2_t)^T$, $W_t:=(W^1_t,W^2_t)^T$,
and define the vector field
\begin{equation*}
b(t,x):=\left(
\begin{array}{c}
b_1(t,x_1) \\
b_2(t,x_2) \\
\end{array}
\right) : \mR_+\times\mR^2\rightarrow\mR^2.
\end{equation*}
Then SDE (\ref{sde0}) can be rewritten as
\begin{align}
\dif X_t=b(t,X_t)\dif t+\dif W_t,\quad X_0=x\in\mR^{2}.    \label{sde2}
\end{align}
According to the above mentioned condition (\ref{pq}), we need to assume
$$
b\in L^q_{loc}(\mR_+;L^p_{loc}(\mR^2))\quad\text{with}\quad 2/p+2/q<1
$$
to ensure the strong well-posedness of the SDE (\ref{sde2}). This in particular means that we need
\begin{align}\label{pq0}
b_1, b_2\in L^q_{loc}(\mR_+;L^p_{loc}(\mR^1))\quad\text{with}\quad 2/p+2/q<1.
\end{align}
On the other hand, the two-dimensional SDE (\ref{sde0}) can also be viewed as two independent equations for $X_t^1$ and $X_t^2$, since $X_t^1$ and $X_t^2$ are not involved with each other in the equation. From this point of view, SDE (\ref{sde0}) can be strongly well-posed under the condition that
\begin{align}\label{pq20}
b_1, b_2\in L^q_{loc}(\mR_+;L^p_{loc}(\mR^1))\quad\text{with}\quad 1/p+2/q<1,
\end{align}
which does not coincide with  (\ref{pq0}). We point out that such problem will always  appear when we consider SDEs in multi-dimension, and especially for degenerate noise cases and multi-scale models involving at least slow and fast phase variables, see e.g. \cite{F-F-P-V,RSX,X,Zh}.

\vspace{2mm}
The main aim of this work is to get rid of the above unreasonableness  by studying SDE (\ref{sde1}) with coefficients in  general mixed-norm spaces. To this end, let ${\bf p}=(p_1, \cdots ,p_d)\in[1,\infty)^d$ be a multi-index, we denote by $L^{\bf p}(\mathbb{R}^{d})$ the space of all measurable functions on $\mathbb{R}^{d}$ with norm
\begin{align*}
\|f\|_{L^{\bf p}(\mathbb{R}^d)}:=\bigg(\int_{\mR}\cdots\bigg(\int_{\mR}|f(x_1,...,x_d)|^{p_1}\dif x_1\bigg)^{\frac{p_2}{p_1}}\dif x_2\bigg)^{\frac{p_3}{p_2}}\cdots\dif x_d\bigg)^{\frac{1}{p_d}}<\infty.
\end{align*}
When $p_i=\infty$ for some $i=1,\cdots,d$, the norm is taken as the supreme over $\mathbb{R}$ with respect to the corresponding variable $x_i\in\mathbb{R}$, and by $L^{\bf p}_{loc}(\mathbb{R}^{d})$ we mean the corresponding local space defined as usual.
Notice that the order is important when taking the above integrals. If we permute the $p_i$s, then increasing the order of  $p_i$ gives the smallest norm, while decreasing the order gives the largest norm.

\vspace{2mm}
Our  main result in this paper is as follows.
\bt\label{main}
Assume that for some $p_1, \cdots, p_d, q\in(2,\infty]$ and every $T>0$,
\begin{align}\label{con}
|b|,|\nabla \sigma|\in L^q([0,T];L^{\bf p}_{loc}(\mR^d))\quad \text{with}\quad\frac{2}{q}+\frac{1}{p_1}+\cdots+\frac{1}{p_d}<1,
\end{align}
and for every $n\in\mN$, $\sigma$ is uniformly continuous in $x\in B_n:=\{x\in\mR^d: |x|\leq n\}$ uniformly with respect to $t\in[0,n]$, and there exist positive constants $\delta_n$ such that for all $(t,x)\in[0,n]\times B_n$,
\begin{align*}
|\sigma(t,x)\xi|^2\geq\delta_n|\xi|^2,\quad\forall\xi\in\mR^{d}.
\end{align*}
Then, for each $x\in\mR^d$ there exists a unique strong solution $X_t(x)$ up to an explosion time $\zeta(x)$ to SDE (\ref{sde1}) such that
$$\lim_{t\uparrow\zeta(x)}X_t(x)=+\infty,\quad a.s..$$
\et

\br
i) The advantage of (\ref{con}) lies in the flexible integrability of the coefficients.  More precisely, it allows the integrability of the coefficients to be small in some directions by taking the integrability index large for the other directions (not as functions of the whole space variable). With this condition, the problem of the tricky example mentioned before does  not appear  since we can take  another index to be $\infty$.  That is, according to Theorem \ref{main}, SDE \eqref{sde0} will be strongly well-posed if $b_1,b_2\in L^q([0,T],L^{\bf p}_{loc}(\mathbb{R}^2))$  with $$
{\bf p}=(p_1,p_2)\quad{\text{satisfying}}\quad 2/q+1/p_1+1/p_2<1.
$$  Since $b_1$ does not depend on $x_2$ and $b_2$ does not depend on $x_1$, we can take $p_2=\infty$ for $b_1$ and $p_1=\infty$ for $b_2$. Thus, the integrability conditions for $b_1$ and $b_2$ are $2/q+1/p_1<1$ and $2/q+1/p_2<1$ respectively, which coincides with \eqref{pq20} when we treat the SDE \eqref{sde0} as two independent equations.

\vspace{1mm}
ii)
As mentioned in \cite{Kr2}, the necessity of mixed-norm spaces arises
when the physical processes have different behavior with respect to each component. In view of (\ref{con}), it reflects the classical fact that the integrability of time variable and space variable has the ratio 1:2. Meanwhile, the integrability of each component of the space variable is the same, which is natural because the noise is non-degenerate. Such kind of mixed-norm spaces will be more important when studying SDEs with degenerate noises. This will be our future works.
\er

In a recent work \cite{RSX}, where we study the averaging principle for slow-fast SDEs with singular coefficients, Theorem \ref{main}  will play an important role in deriving the optimal conditions on the coefficients. Now, let us introduce the proof briefly. The key tool to prove our main result is the
$\mL^q_{\bf p}$-maximal regularity estimate for the following second order parabolic PDEs on $[0,T]\times\mR^d$:
\begin{align}
\p_tu(t,x)=\sL^{a}_2u(t,x)+\sL_1^bu(t,x)+f(t,x),\quad u(0,x)=0,      \label{pde1}
\end{align}
where  $\sL_2^{a}+\sL_1^{b}$ is the infinitesimal generator corresponding to SDE (\ref{sde1}), i.e.,
\begin{align*}
\sL_2^{a}u(t,x):=\tfrac{1}{2}a^{ij}(t,x)\p_{ij}u(t,x),\quad\sL_1^bu(t,x):=b^i(t,x)\p_iu(t,x)
\end{align*}
with $a(t,x)=(a^{ij}(t,x)):=(\sigma\sigma^T)(t,x)$, and $\p_i$ denotes the $i$-th partial derivative respect to $x$. Here we use Einstein's convention that the repeated indices in a product will be summed automatically.
To be more specific, for any $q\in(1,\infty)$ and ${\bf p}=(p_1, \cdots ,p_d)\in(1,\infty)^d$, we need  to establish the following estimate:
\begin{align}\label{KD1}
\|\nabla^2 u\|_{\mL^q_{\bf p}(T)}\leq C\|f\|_{\mL^q_{\bf p}(T)},
\end{align}
see Section 2 for the precise definition of $\mL^q_{\bf p}(T)$. Notice that when $p_1=\cdots=p_d=q$, it is a standard procedure to prove \eqref{KD1} by the classical freezing coefficient argument (cf. \cite{Zh4}).
However, for general $q\in(1,\infty)$ and ${\bf p}\in(1,\infty)^d$ it seems to be non-trival. When $a^{ij}$ is independent of $x$ and $p_1=\cdots=p_d$, estimate \eqref{KD1} was first proved by Krylov in \cite{Kr2}.
In the spatial dependent diffusion coefficient case, Kim \cite{Ky} showed
\eqref{KD1} only for $p_1=\cdots=p_d\leq q$. This was recently generalized to  $p_1=\cdots=p_d>1$ and $q>1$ in \cite{XXZZ} by a duality method. We shall further develop the argument used in \cite{XXZZ}, and combing with the interpolation technique, to prove that \eqref{KD1}   holds for  mixed-norms even in the space variable.
The main result is provided by Theorem \ref{mp}, which should be of independent interest in the theory of PDEs.

\vspace{2mm}

This paper is organized as follows: In Section 2, we study the maximal regularity estimate for second order parabolic equations. In Section 3, we prove our main theorem.
Throughout this paper, we use the following convention: $C$ with or without subscripts will denote a positive constant, whose value may change from one appearance to another, and whose dependence on parameters can be traced from calculations.

\section{Parabolic equations in mixed-norm spaces}

Fix $T>0$ and let $\mR^{d+1}_T:=[0,T]\times\mR^d$. This section is devoted to study the  parabolic equation (\ref{pde1}) on $\mR^{d+1}_T$ in general mixed-norm spaces.
We first introduce some  notations.
For any $\alpha\in\mR$ and ${\bf p}=(p_1, ... ,p_d)\in[1,\infty)^d$, let $H_{\bf p}^\alpha(\mathbb{R}^d):=(1-\Delta)^{-\alpha/2}\big(L^{\bf p}(\mR^d)\big)$
be the usual Bessel potential space with norm
$$
\|f\|_{H_{\bf p}^\alpha(\mathbb{R}^d)}:=\|(\mI-\Delta)^{\alpha/2}f\|_{L^{\bf p}(\mathbb{R}^d)},
$$
where  $(\mI-\Delta)^{\alpha/2}f$ is defined through Fourier's  transform
$$
(\mI-\Delta)^{\alpha/2}f:=\cF^{-1}\big((1+|\cdot|^2)^{\alpha/2}\cF f\big).
$$
Notice that for $n\in\mN$ and ${\bf p}=(p_1, ... ,p_d)\in[1,\infty)^d$, an equivalent norm in $H_{\bf p}^n(\mR^d)$ is given by
$$
\|f\|_{H_{\bf p}^n(\mathbb{R}^d)}=\|f\|_{L^{\bf p}(\mathbb{R}^d)}+\|\nabla^n f\|_{L^{\bf p}(\mathbb{R}^d)}.
$$
For $q\in[1,\infty)$ and any $S<T$, we denote $ \mL^q_{\bf p}(S,T):=L^q([S,T];L^{\bf p}(\mR^d))$. For simplicity, we will write $\mL^q_{\bf p}(T):= \mL^q_{\bf p}(0,T)$, and $\mL^\infty(T)$ consists of functions satisfying
$$
\|f\|_{\mL^\infty(T)}:=\sup_{t\in[0,T]}\sup_{x\in\mR^d}|f(t,x)|<+\infty.
$$
We also introduce  that for $\alpha\in\mR$,
$$
\mH_{\alpha,{\bf p}}^q(T):=L^q\big([0,T];H^{\alpha}_{\bf p}(\mR^d)\big),
$$
and the space $\sH_{\alpha,{\bf p}}^q(T)$ consists of the functions $u=u(t)$ on $[0,T]$ with values in the space of distributions on $\mR^d$ such that $u\in \mH_{\alpha,{\bf p}}^q(T)$ and $\p_t u\in \mL^q_{\bf p}(T)$.

Throughout this section, we always assume that

\vspace{2mm}
\noindent{\bf (H$a$):} $a(t,x)=(\sigma\sigma^T)(t,x)$ is uniformly continuous in $x\in\mR^d$ locally uniformly with respect to $t\in\mR_+$, and there exists a constant $\delta>1$ such that for all $\xi\in\mR^d$,
\begin{align}\label{a}
\delta^{-1}|\xi|^2\leq|a(t,x)\xi|^2\leq \delta|\xi|^2,\qquad\forall x\in\mR^d.
\end{align}

\vspace{1mm}

The main result of this section is as follows.

\bt\label{mp}
Assume that {\bf (H$a$)} holds, ${\bf p}\in (1,\infty)^d$ and $q\in(1,\infty)$.
Let $b\in \mL^{\tilde q}_{\bf \tilde p}(T)$ with ${\bf \tilde p},  \tilde q$  satisfying  $\tilde p_i\in[p_i,\infty)$, $\tilde q\in[q,\infty)$ for $1\leq i\leq d$ and $2/\tilde q+1/\tilde p_1+\cdots+1/\tilde p_d<1$.
Then for every $f\in \mL^q_{\bf p}(T)$,
there exists a unique solution $u\in \sH^{q}_{2,{\bf p}}(T)$ to equation (\ref{pde1}). Moreover,
we have the following estimates:
\begin{enumerate}[(i)]
	\item there is a constant $C_1=C(d,{\bf p},q,\|b\|_{\mL^{\tilde q}_{\bf \tilde p}(T)},T)>0$ such that
	\begin{align}\label{es1}
	\|\p_tu\|_{\mL^q_{\bf p}(T)}+\|u\|_{\mH_{2,{\bf p}}^q(T)}\leq C_1\|f\|_{\mL^q_{\bf p}(T)};
	\end{align}
	
	\item for any $\alpha\in [0,2-\tfrac{2}{q})$, there exists a  constant $C_T=C(d,{\bf p},q,\|b\|_{\mL^{\tilde q}_{\bf \tilde p}(T)},T)$ satisfying $\lim_{T\to0} C_T=0$  such that
		\begin{align}
	\Vert u\Vert_{\mathbb{H}_{\alpha,\bf p}^\infty(T)}\leq C_T\Vert f\Vert_{\mathbb{L}^{q}_{\bf p}(T)}.\label{esss2}
	\end{align}
\end{enumerate}
In particular, we have
	\begin{align}
	\|u\|_{\mL^\infty(T)}\leq \hat C_T\|f\|_{\mL^q_{\bf p}(T)}, \quad\text{if}\quad 2/q+1/p_1+\cdots+1/p_d<2,\label{es2}
	\end{align}
	and
	\begin{align}
	\|\nabla u\|_{\mL^\infty(T)}\leq \hat C_T\|f\|_{\mL^q_{\bf p}(T)},\quad\text{if}\quad 2/q+1/p_1+\cdots+1/p_d<1,   \label{es3}
	\end{align}
where $\hat C_T>0$ is a constant satisfying $\lim_{T\to0} \hat C_T=0$.
\et

We shall provide the proof of the above result in the following subsections.

\subsection{Smooth diffusion coefficients without drift.} In this subsection, we consider PDE (\ref{pde1}) on $\mR_T^{d+1}$ with $b\equiv0$, i.e.,
\begin{align}
\p_tu(t,x)-\sL_2^au(t,x)-f(t,x)=0,\quad u(0,x)=0.      \label{pde3}
\end{align}
We shall focus on the $\mL^q_{\bf p}$-maximal regularity a priori estimate for (\ref{pde3}). To this end, we assume that $a$ is smooth enough, i.e.,  $a$ satisfies {\bf (H$a$)} and for all $m\in\mN$,
$$
\|\nabla^m a^{ij}(t,\cdot)\|_\infty<\infty.
$$
Motivated by \cite{XXZZ}, we also need to consider the dual equation for (\ref{pde3}):
\begin{align}\label{PDE12}
\p_tw(t,x)+\tfrac{1}{2}\p_{ij}\big((a^{i j}(t,x)w(t,x)\big)+f(t,x)=0,\ \ w(T,x)=0.
\end{align}
Our aim in this subsection is to prove the following result.

\bt\label{pri}
For any ${\bf p}\in (1,\infty)^d$ and $q\in(1,\infty)$, there is a constant $C>0$  depending only on $d,{\bf p},q$, $T$  and the continuity modulus of $a$ such that
for every $f\in C^\infty_0([0,T]\times\mR^d)$,
\begin{align}\label{Eq3}
\|\nabla^2 u\|_{\mL_{\bf p}^q(T)}\leq C\|f\|_{\mL_{\bf p}^q(T)},\ \ \| w\|_{\mL_{\bf p}^{q}(T)}\leq C \|f\|_{\mH_{-2,\bf p}^{q}(T)},
\end{align}
where $u$ and $w$ are solutions of \eqref{pde3} and \eqref{PDE12} respectively.
Moreover,
for any $\alpha\in[0,2-\frac{2}{q})$, we have
\begin{align}\label{t}
\Vert u\Vert_{\mathbb{H}_{\alpha,\bf p}^\infty(T)}\leq C_T\Vert f\Vert_{\mathbb{L}^{q}_{\bf p}(T)},\quad\Vert w\Vert_{\mathbb{H}_{\alpha-2,\bf p}^\infty(T)}\leq C_T\Vert f\Vert_{\mathbb{H}^{q}_{-2,\bf p}(T)}.
\end{align}
where $C_T>0$ is a constant satisfying $\lim_{T\to0} C_T=0$.
\et

Before giving the proof of the above theorem,  we first show the following lemma for later use, which generalizes \cite[Lemma 2.6]{Kr2} (see also \cite[Lemma 3.5]{Ky}).

\bl\label{pe}
	Let $T\in[0,\infty)$, $p\in(1,\infty)$ and $n\in\mathbb{N}$. For $k=1,\cdots,n$, let $a_k:\mathbb{R_+}\rightarrow \mathbb{R}^d\otimes\mathbb{R}^d$ be measurable functions and there exists a constant $\delta\geq 1$ such that for all $t\in[0,T]$,
 $$\delta^{-1}|\xi|^2\leq a_k^{ij}(t)\xi_i\xi_j\leq \delta|\xi|^2,\quad \forall\xi\in\mathbb{R}^d.$$
Let $\lambda_k\in(0,\infty)$, $\gamma_k\in\mathbb{R}$ and $u_k\in\sH_{\gamma_k+2, p}^{p}(T)$ be the solution to the equation
	\begin{align*}
	\partial_tu^k=a_k^{ij}\partial_{ij}u^k+f^k,\quad u^k(0,x)=0
	\end{align*}
	with $f\in \mathbb{H}_{\gamma_k, p}^{p}(T)$. Denote by $\Lambda_k=(\lambda_k-\Delta)^{\gamma_k/2}$. Then for any $i=2,\cdots,d$, we have
	\begin{align}\label{estmix1}
	\int_0^T\!\!&\int_{\mathbb{R}}\cdots\int_{\mathbb{R}}\prod_{k=1}^n\Vert\Lambda_k\Delta u^k(t,\cdot,x_{i},\cdots, x_{d})\Vert_{L^p(\mathbb{R}^{i-1})}^p\dif x_i\cdots \dif x_{d} \dif t\nonumber\\&\leq C_0 \sum_{k=1}^n\int_0^T\!\int_{\mathbb{R}}\cdots\int_{\mathbb{R}}\Vert \Lambda_kf^k(t,\cdot,x_{i},\cdots, x_{d})\Vert_{L^p(\mathbb{R}^{i-1})}^p
	\nonumber\\&\quad\quad\quad\quad\quad\quad\quad\quad\times\prod_{j\neq k}\Vert\Lambda_j\Delta u^j(t,\cdot,x_{i},\cdots, x_{d})\Vert_{L^p(\mathbb{R}^{i-1})}^p\dif x_i\cdots \dif x_{d} \dif t,
	\end{align}
	where $C_0$ is a positive constant.
\el

\begin{proof}
	Without loss of generality we may assume $\gamma_k=0$. Define $v^k:=\Delta u^k$. For fixed $i=2,\cdots,d$ and $x_i,x_{i+1},\cdots,x_d\in\mR$,  take $X=(x^1,\cdots,x^n)$ with $x^j=(x^j_1,\cdots,x_d^j)\in\mathbb{R}^d$ such that  for  all $1\leq j\leq n$, $x^j_i\equiv x_i\in\mathbb{R}$, $x^j_{i+1}\equiv x_{i+1}\in\mathbb{R}$, $\cdots$, $x^j_{d}\equiv x_{d}\in\mathbb{R}$.  Hence, $X\in \mathbb{R}^{d+(n-1)(i-1)}$. For such $X$, we define
	$$V(t,X):=v^1(t,x^1)\times\cdots\times v^n(t,x^n).$$
	Then one can check that
	$$\partial_tV(t,X):=\mathbb{P}V(t,X)+F(t,X),$$
	where $$\mathbb{P}V=a_k^{ij}\frac{\partial^2V}{\partial x^k_i\partial x^k_j},$$
	$$F(t,X):=\Delta_{x^j}G^j(t,X), \quad G^j(t,X)=f^j(t,x^j)\prod_{j\neq k}v^k(t,x^k).$$
	By classical result (cf. \cite[Lemma 1.5]{Kr2}) we have
	$$\Vert V\Vert_{L^p([0,T]\times\mathbb{R}^{d+(n-1)(i-1)})}\leq C_0\sum_j\Vert G^j\Vert_{L^p((0,T)\times\mathbb{R}^{d+(n-1)(i-1)}},$$
	which is exactly \eqref{estmix1}. The lemma is proved.
\end{proof}

With the above preparation, we can give:

\begin{proof}[Proof of Theorem \ref{pri}]
Let ${\bf p}=(p_1,p_2,\cdots,p_d)\in(1,\infty)^d$ and $q\in(1,\infty)$. We divide  the proof into five steps: we first prove estimate \eqref{Eq3} in step 1-4, and in the fifth step we show estimate \eqref{t}.

	\vspace{1mm}	
\noindent\emph{Step 1.}  [Case $p_1=\cdots=p_d\in(1,\infty)$ and $q\in(1,\infty)$]. In this case, the estimate (\ref{Eq3}) was proved by \cite[Theorem 3.3]{XXZZ}.
	
	\vspace{1mm}
\noindent\emph{Step 2.}  [Case $p_1=\cdots=p_{d-1}\in(1,\infty)$ and $p_d=q\in (1,\infty)$].
We only prove the estimate for $w$ since the estimate for $u$  is similar and easier. By duality and the same argument as in the proof of \cite[Theorem 3.3]{XXZZ}, it is sufficient to prove the desired estimate when $q=p_d=np_{d-1}=\cdots=np_1=:np$ for $n\in\mathbb{N}_+$ and $p\in(1,\infty)$. That is to say, we shall prove:
	\begin{align*}
	\Vert  w\Vert_{L^{np}([0,T]\times\mathbb{R},{L^p(\mathbb{R}^{d-1})})}\leq C\Vert f\Vert_{\mathbb{H}^{np}_{-2,\bf p}(T)}^{np},\quad {\bf p}=(p,\cdots,p,np).
	\end{align*}
Take a non-negative smooth function $\phi$ supported in the ball $B_r:=\left\{x\in\mathbb{R}^{d}:|x|<r\right\}$ and $\int_{\mathbb{R}^d}|\phi|^{p}dx=1$, where $r$ is a small constant which will be determined below. For $x,z\in\mathbb{R}^d$, $s\in\mathbb{R}_+$, define $\phi_z(x):=\phi(x-z)$, $w_z(s,x):=w(s,x)\phi_z(x)$, $f_z(s,x):=f(s,x)\phi_z(x)$ and $a_z(s):=a(s,z)$. Then we can write
	\begin{align}\label{eqw}
	\partial_tw_z+\partial_{ij}(a^{ij}_zw_z)+g_z=0,\quad w_z(T,x)=0,
	\end{align}
	where $$g_z=f_z+\partial_{ij}(a^{ij}w)\phi_z-\partial_{ij}(a^{ij}_zw\phi_z).$$
	Below we  drop the time variable for simplicity, and for any $\gamma\in\mathbb{R}$ and fixed $x_d\in\mR$, we denote by $\Vert f(\cdot,x_d)\Vert_{H_p^\gamma(\mR^{d-1})}:=\Vert ((1-\Delta)^{\gamma/2}f)(\cdot, x_d)\Vert_{L^p(\mathbb{R}^{d-1})}$.  Notice that
	$$g_z=f\phi_z-2\partial_j(a^{ij}w)\partial_i\phi_z-a^{ij}w\partial_i\partial_j\phi_z+\partial_i\partial_j((a^{ij}-a^{ij}_z)w_z).$$
By the continuity of $a$, we have
	\begin{align*}
	\(\int_{\mathbb{R}^d}\Vert g_z(\cdot,x_d)\Vert_{H^{-2}_p(\mR^{d-1})}^p&\dif z
	\)^{1/p}\leq C_1\Vert f(\cdot,x_d)\Vert_{H^{-2}_p(\mR^{d-1})}\\
	&+C_{r}\sum_{i,j}\Vert (a^{ij}w)(\cdot,x_d)\Vert_{H^{-1}_p(\mR^{d-1})}\\
	&+C_r\sum_{i,j}\Vert a^{ij}w(\cdot,x_d)\Vert_{H^{-2}_p(\mR^{d-1})}+c_r\Vert w(\cdot,x_d)\Vert_{L^p(\mathbb{R}^{d-1})},
	\end{align*}
	where $C_r>0$ and $\lim_{r\rightarrow 0}c_r=0$.
	Let $\rho_n$ be a family of standard mollifiers and $a_n(t,x):=a(t,\cdot)*\rho_n(x)$ be the mollifying approximation of $a$. For every $\eps>0$, we can take $n$ large enough such that
	\begin{align*}
	\sum_{i,j}\Vert (a^{ij}w)(\cdot,x_d)&\Vert_{H^{-1}_p(\mR^{d-1})}+\sum_{i,j}\Vert a^{ij}w(\cdot,x_d)\Vert_{H^{-2}_p(\mR^{d-1})}\\\leq  & C_2\|(aw)(\cdot,x_d)\|_{H^{-1}_p(\mR^{d-1})}\\
	\leq& C_2\|(a_nw)(\cdot,x_d)\|_{H^{-1}_p(\mR^{d-1})}+C_2\|((a-a_n)w)(\cdot,x_d)\|_{H^{-1}_p(\mR^{d-1})}\\
	\leq& C_n\Vert w(\cdot,x_d)\Vert_{H^{-1}_p(\mR^{d-1})}+c_{1/n}\Vert w(\cdot,x_d)\Vert_{L^p(\mR^{d-1})}\\
	\leq &C_n\Vert w(\cdot,x_d)\Vert_{H^{-2}_p(\mR^{d-1})}+\eps\Vert w(\cdot,x_d)\Vert_{L^p(\mathbb{R}^{d-1})},
	\end{align*}
where the last step is due to the interpolation and Young's inequalities.
	Hence, we arrive at
		\begin{align}\label{g}
	\(\int_{\mathbb{R}^d}\Vert g_z(\cdot,x_d)\Vert_{H^{-2}_p(\mR^{d-1})}^p&\dif z
	\)^{1/p}\leq C_3\Vert f(\cdot,x_d)\Vert_{H^{-2}_p(\mR^{d-1})}\no\\
	&+ C_r\Vert w(\cdot,x_d)\Vert_{H^{-2}_p(\mR^{d-1})}+c_r\Vert w(\cdot,x_d)\Vert_{L^p(\mathbb{R}^{d-1})}.
	\end{align}
	Observe that
	\begin{align}\label{w}
	\Vert w\Vert_{L^{np}([0,T]\times\mathbb{R},{L^p(\mathbb{R}^{d-1})})}^{np}=&\int_0^T\!\!\!\int_{\mathbb{R}}\Big(\int_{\mathbb{R}^d}\Vert w(t,\cdot,x_d)\phi_z\Vert_{L^p(\mathbb{R}^{d-1})}^pdz\Big)^n\dif {x_d}\dif t\nonumber\\=&\int_0^T\!\!\!\int_{\mathbb{R}}\!\int_{\mathbb{R}^{nd}}\prod_{k=1}^n\Vert w_{z_k}(t,\cdot,x_d)\Vert_{{L^p(\mathbb{R}^{d-1})}}^p\dif {z_1}\cdots \dif z_{n}\dif {x_d}\dif t.
	\end{align}	
	Using Lemma \ref{pe},  we can deduce that
	\begin{align*}
	\int_0^T\!\!\int_{\mathbb{R}}&\prod_{k=1}^n\Vert w_{z_k}(t,\cdot,x_d)\Vert_{L^p(\mathbb{R}^{d-1})}^p\dif {x_d}\dif t\\&\leq C_4\sum_{k=1}^n\int_0^T\!\!\int_{\mathbb{R}}\Vert g_{z_k}(t,\cdot,x_d)\Vert_{H^{-2}_p(\mathbb{R}^{d-1})}^p\prod_{l\neq k}\Vert w_{z_l}(t,\cdot,x_d)\Vert_{L^p(\mathbb{R}^{d-1})}^p\dif x_{d}\dif t,
	\end{align*}
	which together with \eqref{g} and  \eqref{w}  implies
	\begin{align*}
	\Vert w\Vert_{L^{np}([0,T]\times\mathbb{R},L^p(\mathbb{R}^{d-1}))}^{np}&\leq C_5\sum_{k=1}^n\int_0^T\int_{\mathbb{R}}\int_{\mathbb{R}^{nd}}\Vert g_{z_k}(t,\cdot,x_d)\Vert_{H^{-2}_p(\mR^{d-1})}^p\\
	&\qquad\qquad\quad\quad\quad\times\prod_{l\neq k}\Vert w_{z_l}(t,\cdot,x_d)\Vert_{L^p(\mathbb{R}^{d-1})}^p\dif z_1\cdots \dif z_n\dif x_{d}\dif t
	\\ &=C_5n\int_0^T\int_{\mathbb{R}}\Big(\int_{\mathbb{R}^d}\Vert g_{z}(t,\cdot,x_d)\Vert_{H^{-2}_p(\mR^{d-1})}^p\dif z\Big)\\
	&\qquad\qquad\quad\times\Big(\int_{\mathbb{R}^d}\Vert  w_{z}(t,\cdot,x_d)\Vert_{L^p(\mathbb{R}^{d-1})}^p\dif z\Big)^{n-1}\dif x_d\dif t
	\\ &\leq C_6\int_0^T\int_{\mathbb{R}}\Big(\int_{\mathbb{R}^d}\Vert g_{z}(t,\cdot,x_d)\Vert_{H^{-2}_p(\mR^{d-1})}^pdz\Big)\\
	&\qquad\qquad\quad\qquad\quad\times\Vert w(t,\cdot,x_d)\Vert_{L^p(\mathbb{R}^{d-1})}^{(n-1)p}\dif x_d\dif t
	\\ &\leq C_6\int_0^T\int_{\mathbb{R}} \Vert f(t,\cdot,x_d)\Vert_{H^{-2}_p(\mR^{d-1})}^{np}\dif x_d\dif t\\
	&\qquad\qquad\quad\quad\quad+C_r\int_0^T\int_{\mathbb{R}}\Vert w(t,\cdot,x_d)\Vert_{H^{-2}_p(\mR^{d-1})}^{np}\dif x_d\dif t\\
	&\qquad\qquad\quad\quad\quad+c_r\Vert w\Vert_{L^{np}([0,T]\times\mathbb{R},L^p(\mathbb{R}^{d-1}))}^{np},
	\end{align*}
where the last inequality follows from H\"older's inequality and Young's inequality for product.
	 Let $r$ be small enough so that $c_r<1$, we can get that
	\begin{align}\label{wn}
	\Vert w\Vert_{L^{np}([0,T]\times\mathbb{R},L^p(\mathbb{R}^{d-1}))}^{np}&\leq C_7\bigg(\int_0^T\!\!\!\int_{\mathbb{R}} \Vert f(t,\cdot,x_d)\Vert_{H^{-2}_p(\mR^{d-1})}^{np}\dif x_d\dif t\nonumber\\&\quad+\int_0^T\!\!\!\int_{\mathbb{R}}\Vert w(t,\cdot,x_d)\Vert_{H^{-2}_p(\mR^{d-1})}^{np}\dif x_d\dif t\bigg).
	\end{align}
It remains to control the last term on the right hand side of the above inequality.	To this end, let $\kappa_{s,t}^z:=\int_s^ta_z(u)\dif u$ and
	$$P_{s,t}^z(x,x-y):=\frac{1}{\sqrt{(2\pi)^{d}\det(\kappa_{s,t}^z)}}e^{-\frac{(\kappa_{s,t}^z)^{-1}|x-y|^2}{2(t-s)}}.$$
	Then the solution of equation \eqref{eqw} is given by
	\begin{align*}
	w_z(t,x)=\int_t^T\!\!\!\int_{\mathbb{R}^d}P_{t,u}^z(x,x-y)g_z(u,y)\dif y\dif u.
	\end{align*} By {\bf (H$a$)} and  a standard interpolation technique, we get that for any $\alpha\in[0,2)$,
$$\Vert w_z(t,\cdot,x_d)\Vert_{H^{\alpha-2}_p(\mathbb{R}^{d-1})}\leq C_8\int_t^T(u-t)^{-\frac{\alpha}{2}}\Vert g_z(u,\cdot,x_d)\Vert_{H^{-2}_p(\mathbb{R}^{d-1})}\dif u.$$
Thus by Minkowski's inequality we have
\begin{align*}
\Vert w(t,\cdot,x_d)\Vert_{H^{\alpha-2}_p(\mathbb{R}^{d-1})}&\leq\Big(\int_{\mathbb{R}^d}\Vert w_z(t,\cdot,x_d)\Vert_{H^{\alpha-2}_p(\mathbb{R}^{d-1})}^p\dif z\Big)^{1/p}\\&\leq C_8\int_t^T(u-t)^{-\frac{\alpha}{2}}\Big(\int_{\mathbb{R}^d}\Vert g_z(u,\cdot,x_d)\Vert_{H^{-2}_p(\mathbb{R}^{d-1})}^p\dif z\Big)^{1/p}\dif u.
\end{align*}
Using \eqref{g} and the similar argument as in the proof of \eqref{wn}, we further have
\begin{align*}
\Vert w(t,\cdot,x_d)\Vert_{H^{\alpha-2}_p(\mathbb{R}^{d-1})}\leq C\int_t^T(u-t)^{-\frac{\alpha}{2}}\Big(&\Vert f(u,\cdot,x_d)\Vert_{H^{-2}_p(\mathbb{R}^{d-1})}\\
&+\Vert w(u,\cdot,x_d)\Vert_{H^{-2}_p(\mathbb{R}^{d-1})}\Big)\dif u.
\end{align*}
Let $\frac{1}{q'}+\frac{1}{np}=1$, then for any $\alpha\in[0,2-\frac{2}{np})$, we get by H\"older's inequality that
\begin{align}\label{5}
\Vert w(t,\cdot,x_d)&\Vert_{H^{\alpha-2}_p(\mathbb{R}^{d-1})}^{np} \leq C_9 \Big(\int_t^T(u-t)^{-\frac{q'\alpha}{2}}\dif u\Big)^{np/q'}\int_t^T\Big(\Vert f(u,\cdot,x_d)\Vert_{H^{-2}_p(\mathbb{R}^{d-1})}\nonumber\\&\quad\quad\quad\quad\quad\quad\quad\quad\quad\quad\quad\quad\quad\quad+\Vert w(u,\cdot,x_d)\Vert_{H^{-2}_p(\mathbb{R}^{d-1})}\Big)^{np}\dif u
\nonumber\\&\leq C_T\!\!\int_t^T\!\!\!\(\Vert f(u,\cdot,x_d)\Vert_{H^{-2}_p(\mathbb{R}^{d-1})}^{np}+\Vert w(u,\cdot,x_d)\Vert_{H^{-2}_p(\mathbb{R}^{d-1})}^{np}\)\dif u,
\end{align}
where $C_T>0$ satisfying $\lim_{T\to0} C_T=0$.
Then	by taking $\alpha=0$ and Gronwall's inequality we can obtain
	\begin{align}\label{sup}
	\sup_{s\in[0,T]}\Vert w(s,\cdot,x_d)\Vert_{H^{-2}_p(\mathbb{R}^{d-1})}^{np}\leq C_T \int_t^T\Vert  f(u,\cdot,x_d)\Vert_{H^{-2}_p(\mathbb{R}^{d-1})}^{np}\dif u,
	\end{align}
which in particular
implies that
	\begin{align*}
	\Vert w\Vert_{\mathbb{H}_{-2,\bf p}^\infty}^{np}\leq C_T\Vert f\Vert_{\mathbb{H}^{np}_{-2,\bf p}}^{np}.
	\end{align*}
  Taken this back into  \eqref{wn} yields that
\begin{align*}
	\Vert  w\Vert_{L^{np}([0,T]\times\mathbb{R},{L^p(\mathbb{R}^{d-1})})}\leq C_{10}\Vert f\Vert_{\mathbb{H}^{np}_{-2,\bf p}(T)}^{np},\quad {\bf p}=(p,\cdots,p,np).
	\end{align*}
	
 \vspace{1mm}\noindent
	\emph{Step 3.}  [Case $p_1=\cdots=p_{d-j}\in(1,\infty)$ and $p_{d-j+1}=\cdots =p_d=q\in (1,\infty)$ with any $1\leq j\leq d-1$].
	This can be proved  by following exactly the same arguments as in the proof of step 2, except that we need to use \eqref{estmix1}  Lemma \ref{pe} with $i=d-j+1$,  we omit the details.

	\vspace{1mm}\noindent
	\emph{Step 4.} [Interpolation] We develop an interpolation scheme to show the following claim:
	\begin{align}\label{indu}
{\text{ for every }} 1\leq j\leq d-1,
\eqref{Eq3} &{\text{ holds with }}	p_1=\cdots =p_{d-j}\in(1,\infty) \no\\
&{\text{ and }} p_{d-j+1}, p_{d-j+2},\cdots,p_d, q \in(1,\infty).
	\end{align}  In particular, when $j=d-1$, we get the desired result.

Interpolate the results in step 1 and step 2, we can get that \eqref{Eq3} holds when $p_1=\cdots =p_{d-1}\in(1,\infty)$ and $p_d, q\in(1,\infty)$. Thus, the assertion (\ref{indu}) is true for $j=1$. Assume that (\ref{indu}) holds for some $j=n-1\leq d-2$, we proceed to show that (\ref{indu}) is true for $n$.
For this, we  first  interpolate $p_1=\cdots =p_{d}\in(1,\infty)$ and $q\in(1,\infty)$ with $p_1=\cdots =p_{d-j}\in(1,\infty)$ and $p_{d-j+1}=\cdots=p_d=q\in(1,\infty)$ (both of which hold according to step 3) to get that the \eqref{Eq3} holds for $p_1=\cdots =p_{d-j}\in(1,\infty)$ and $p_{d-j+1}=p_{d-j+2}=\cdots=p_d,q\in(1,\infty)$. Then we interpolate $p_1=\cdots =p_{d-j}\in(1,\infty)$ and $p_{d-j+1}=p_{d-j+2}=\cdots=p_d,q\in(1,\infty)$ with $p_1=\cdots =p_{d-1}\in(1,\infty)$ and $p_d,q\in(1,\infty)$ (which holds by the induction assumption for $j=1$) to get that \eqref{Eq3} holds for $p_1=\cdots =p_{d-j}\in(1,\infty)$ and $p_{d-j+1}=\cdots=p_{d-1}, p_d,q\in(1,\infty)$. Again we interpolate
$p_1=\cdots =p_{d-j}\in(1,\infty)$ and $p_{d-j+1}=\cdots=p_{d-1}, p_d,q\in(1,\infty)$ with
$p_1=\cdots =p_{d-2}\in(1,\infty)$ and $p_{d-1}, p_d,q\in(1,\infty)$ (which holds by the induction assumption for $j=2$) to get that \eqref{Eq3} holds for $p_1=\cdots =p_{d-j}\in(1,\infty)$ and $p_{d-j+1}=\cdots=p_{d-2},p_{d-1}, p_d,q\in(1,\infty)$. Keep interpolating with the induction assumption for $j=3,\cdots,n-1$, we can get that \eqref{Eq3} holds for $p_1=\cdots =p_{d-j}\in(1,\infty)$ and $p_{d-j+1}, p_{d-j+2},\cdots,p_d,q\in(1,\infty)$.
		
	\vspace{1mm}\noindent
\emph{Step 5.} Finally, we proceed to prove estimate (\ref{t}).  With the same argument as in the previous 4 steps, it is sufficient to prove the following estimate:\begin{align*}
	\Vert w\Vert_{\mathbb{H}_{\alpha-2,\bf p}^\infty}^{np}\leq C_T\Vert f\Vert_{\mathbb{H}^{np}_{-2,\bf p}}^{np}, \quad {\bf p}=(p,\cdots,p,np),\quad \alpha\in[0,2-\frac{2}{np}),
	\end{align*}
where $\lim_{T\rightarrow\infty} C_T=0$.
In fact, by  \eqref{5} and \eqref{sup}, we get for any $\alpha\in[0,2-\frac{2}{np})$,
\begin{align*}
	\sup_{s\in[0,T]}\int_{\mathbb{R}}\Vert w(s,\cdot,x_d)\Vert_{H^{\alpha-2}_p(\mathbb{R}^{d-1})}^{np}\dif x_d&\leq \int_{\mathbb{R}}\sup_{s\in[0,T]}\Vert w(s,\cdot,x_d)\Vert_{H^{\alpha-2}_p(\mathbb{R}^{d-1})}^{np}\dif x_d\\&\leq C_T\Vert f\Vert_{\mathbb{H}^{np}_{-2,\bf p}}^{np}.
	\end{align*}
The whole proof can be finished.
\end{proof}

\subsection{Proof of Theorem \ref{mp}}
Now, we are in the position to give:

\begin{proof}[Proof of Theorem \ref{mp}]
	By standard continuity method, it suffices to establish the estimates (\ref{es1}) and (\ref{esss2}). Estimates (\ref{es2}) and (\ref{es3}) then follow by Sobolev embedding thoeorems,  see e.g. \cite{VN}. We divide the proof into two steps.

\vspace{1mm}\noindent	
	(i) (Case $b\equiv 0$) For ${\bf p}\in(1,\infty)^d$ and $q\in(1,\infty)$, let $u\in \sH^{q}_{2,{\bf p}}(T)$ and $f\in\mL_{\bf p}^q(T)$ satisfy \eqref{pde3}, and let $\rho_n$ be a family of standard mollifiers. Define
	$$
	u_n(t,x):=u(t,\cdot)*\rho_n(x),\ \ a_n(t,x):=a(t,\cdot)*\rho_n(x),\ \ f_n(t,x):=f(t,\cdot)*\rho_n(x).
	$$
	Then, it is easy to see that $u_n$ satisfies
	$$
	\p_t u_n=a^{ij}_n\p_{ij} u_n+g_n,\ \ u_n(0,x)=0,
	$$
	where
	$$
	g_n:=f_n+(a^{ij}\p_{ij}u)*\rho_n-a^{ij}_n\p_{ij} u_n.
	$$
	As a result of  \eqref{Eq3} and (\ref{t}), we have
	\begin{align*}
	\|\nabla^2 u_n\|_{\mL_{\bf p}^q(T)}\leq C_1\big(\|f_n\|_{\mL_{\bf p}^q(T)}+\|(a^{ij}\p_{ij}u)*\rho_n-a^{ij}_n\p_{ij} u_n\|_{\mL_{\bf p}^q(T)}\big),
	\end{align*}
	and there exists a constant $C_T$ with $\lim_{T\to\infty}C_T=\infty$ such that
	$$
	\|u_n\|_{\mH^\infty_{\alpha,{\bf p}}(T)}\leq C_T\big(\|f_n\|_{\mL_{\bf p}^q(T)}+\|(a^{ij}\p_{ij}u)*\rho_n-a^{ij}_n\p_{ij} u_n\|_{\mL_{\bf p}^q(T)}\big).
	$$
	Letting $n\to\infty$ and by the property of convolution, we can obtain the desired result.

\vspace{2mm}\noindent
	(ii)  (Case $b\neq 0$)  Let $\frac{1}{p_i}=\frac{1}{\tilde p_i}+\frac{1}{\hat p_i}$ and $\frac{1}{q}=\frac{1}{\tilde q}+\frac{1}{\hat q}$, by H\"older's inequality and Sobolev embedding theorem (see \cite{VN}), we get
	\begin{align}\label{tilde}
	\Vert b\cdot\nabla u\Vert_{\mathbb{L}_{\bf p}^q(T)}\leq C_2\Vert b\Vert_{\mathbb{L}_{\tilde {\bf p}}^{\tilde q}(T)}\Vert \nabla u\Vert_{\mathbb{L}_{\hat {\bf p}}^{\hat q}(T)}&\leq C_2\Vert b\Vert_{\mathbb{L}_{\tilde {\bf p}}^{\tilde q}(T)}\Vert u\Vert_{\mathbb{H}_{1+\theta, \bf p}^{\hat q}(T)}\nonumber\\&\leq  C_T\Vert b\Vert_{\mathbb{L}_{\tilde {\bf p}}^{\tilde q}(T)}\Vert u\Vert_{\mathbb{H}_{1+\theta, \bf p}^{\infty}(T)}.
	\end{align}
	where $\theta\in(\frac{1}{ \tilde p_1}+\cdots+\frac{1}{ \tilde p_d},1-\frac{2}{\tilde q})\subset(\frac{1}{ p_1}+\cdots+\frac{1}{ p_d},1-\frac{2}{\tilde q})$.
	By the result of (i) and  \eqref{tilde}, we have  that
	$$\Vert u\Vert_{\mathbb{H}^\infty_{1+\theta,\bf p}(T)}\leq C_T\Big(\Vert f\Vert_{\mathbb{L}_{\bf p}^q(T)}+\Vert b\cdot\nabla u\Vert_{\mathbb{L}_{\bf p}^q(T)}\Big)\leq  C_T\Big(\Vert f\Vert_{\mathbb{L}_{\bf p}^q(T)}+\Vert b\Vert_{\mathbb{L}_{\tilde {\bf p}}^{\tilde q}(T)}\Vert u\Vert_{\mathbb{H}_{1+\theta, \bf p}^{\infty}(T)}\Big).$$
	By choosing $T$ small so that $C_T\Vert b\Vert_{\mathbb{L}_{\tilde{\bf p}}^{\tilde q}(T)}<1$, we have
	\begin{align}\label{cT}
\Vert u\Vert_{\mathbb{H}^\infty_{1+\theta,\bf p}(T)} \leq C_3\Vert f\Vert_{\mathbb{L}_{\bf p}^q(T)}.
\end{align}
	Then by (i) we get
	\begin{align*}
	\|\nabla^2 u\|_{\mL_{\bf p}^q(T)}&\leq C_4\Big(\Vert f\Vert_{\mathbb{L}_{\bf p}^q(T)}+\Vert b\cdot\nabla u\Vert_{\mathbb{L}_{\bf p}^q(T)}\Big)\\
	&\leq C_4(T)\Big(\Vert f\Vert_{\mathbb{L}_{\bf p}^q(T)}+\Vert b\Vert_{\mathbb{L}_{\tilde {\bf p}}^{\tilde q}(T)}\Vert u\Vert_{\mathbb{H}_{1+\theta, \bf p}^{\infty}(T)}\Big)
	\\&\leq C_4\Vert f\Vert_{\mathbb{L}_{\bf p}^q(T)}.
	\end{align*}
Furthermore, \eqref{cT} shows that for any $\alpha\in[0,2-\frac{2}{q})$,
$$\Vert u\Vert_{\mathbb{H}^\infty_{\alpha,\bf p}(T)} \leq C_T\Vert f\Vert_{\mathbb{L}_{\bf p}^q(T)}$$
where $\lim_{T\to0} C_T=0$.
	The whole proof is finished.
\end{proof}

\section{Well-posedness of SDEs with singular coefficients}

We first provide the existence result for weak solutions of SDE (\ref{sde1}) and prove the Krylov estimate, which will play an important role below.

\begin{lemma}
	Assume {\bf (H$a$)} holds, $b\in \mathbb{L}_{\bf p}^q(T)$ with $q,p_1,\cdots,p_d\in(2,\infty]$ and $2/q+1/p_1+\cdots+1/p_d<1$. Then there exists a weak solution $(X_t)_{t\geq0}$ to  SDE (\ref{sde1}).
Moreover, for any function
$f\in \mathbb{L}_{\bf \hat p}^{\hat q}(T)$ with $\hat q, \hat p_1,\cdots,\hat p_d\in(1,\infty)$ satisfying $2/\hat q+1/\hat p_1+\cdots+1/\hat p_d<2$, we have that
	\begin{align}\label{Kry}
	\mE\left(\int_0^T|f(s,X_s)|\dif s\right)\leq C\|f\|_{\mathbb{L}_{\bf\hat p}^{\hat q}(T)},
	\end{align}
	where $C=C(d,{\bf \hat p},\hat q,\Vert b\Vert_{\mathbb{L}_{\bf p}^{q}(T)},T)$ is a positive constant.
\end{lemma}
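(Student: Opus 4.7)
The plan is a standard mollification-and-compactness scheme: construct the weak solution as a limit of strong solutions to smoothed approximations, then pass the Krylov estimate through the limit. Let $\rho_n$ be a family of standard mollifiers and set $b_n := b*\rho_n$, together with smooth, uniformly elliptic approximations $\sigma_n$ of $\sigma$ converging locally uniformly in $x$. Since $b_n,\sigma_n$ are smooth and globally Lipschitz in $x$, the approximating SDE
\begin{align*}
dX^n_t = b_n(t,X^n_t)\,dt + \sigma_n(t,X^n_t)\,dW_t, \qquad X^n_0 = x,
\end{align*}
admits a unique strong solution on the original probability space, with $\|b_n\|_{\mathbb{L}^q_{{\bf p}}(T)} \leq \|b\|_{\mathbb{L}^q_{{\bf p}}(T)}$ uniformly in $n$. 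The plan then splits into four steps: (i) establish the Krylov estimate \eqref{Kry} for $X^n$ with constant uniform in $n$; (ii) deduce tightness of $\{\mathrm{Law}(X^n)\}$ in $C([0,T];\mathbb{R}^d)$; (iii) extract a weak limit via Skorohod and identify it as a weak solution of \eqref{sde1}; (iv) pass the Krylov estimate itself to the limit.

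For step (i), I would use a PDE--It\^o argument. For nonnegative $f \in \mathbb{L}^{\hat q}_{\hat{{\bf p}}}(T)$, solve the backward Kolmogorov equation
\begin{align*}
\partial_s v_n + \tfrac12 a_n^{ij}\partial_{ij}v_n + b_n^i\partial_i v_n + f = 0, \qquad v_n(T,\cdot) = 0,
\end{align*}
which by time reversal reduces to a forward equation of the form \eqref{pde1}. To invoke Theorem~\ref{mp} with source indices $(\hat q,\hat{{\bf p}})$, one needs drift indices $\tilde q\geq \hat q$, $\tilde p_i\geq \hat p_i$ with $2/\tilde q + \sum_i 1/\tilde p_i < 1$ and $b_n\in \mathbb{L}^{\tilde q}_{\tilde{{\bf p}}}(T)$. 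In the cases that actually drive steps (ii)--(iv)---namely \eqref{Kry} applied to $f=|b_n - b_\varepsilon|$ or to powers $|b_n|^r\mathbf{1}_{[s,t]}$ for suitable $r$---one has $\hat p_i\leq p_i$ and $\hat q \leq q$, so the natural choice $\tilde{{\bf p}}={\bf p}, \tilde q = q$ works directly; for the remaining indices one fits $\tilde p_i, \tilde q$ slightly above $\max(p_i,\hat p_i), \max(q,\hat q)$ while preserving $2/\tilde q + \sum_i 1/\tilde p_i < 1$ using the slack in the hypothesis on $b$. Estimate \eqref{es2} then yields $\|v_n\|_{\mathbb{L}^\infty(T)}\leq C\|f\|_{\mathbb{L}^{\hat q}_{\hat{{\bf p}}}(T)}$ with $C$ independent of $n$. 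A generalized It\^o formula applied to $v_n(t,X^n_t)$---justified by mollifying $v_n$ once more, applying classical It\^o, and passing to the limit using \eqref{esss2}, Burkholder--Davis--Gundy, and the classical Krylov bound for the smooth SDE---produces
\begin{align*}
v_n(0,x) = \mathbb{E}\int_0^T f(s,X^n_s)\,ds,
\end{align*}
which combined with the $L^\infty$ bound on $v_n$ yields \eqref{Kry} for $X^n$ with a constant uniform in $n$.

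The uniform Krylov estimate drives the remaining steps. Tightness of $\{\mathrm{Law}(X^n)\}$ follows from Kolmogorov's continuity criterion: the martingale part is handled by Burkholder--Davis--Gundy, while moments of $\int_s^t b_n(u,X^n_u)\,du$ are controlled by Khasminskii's exponential estimate, itself obtained by iterating \eqref{Kry} on powers of $|b_n|\mathbf{1}_{[s,t]}$---which is precisely why the flexibility $2/\hat q + \sum_i 1/\hat p_i < 2$ in the statement is needed. Skorohod's theorem then produces, along a subsequence, an almost surely convergent version $X^n\to X$ on a new probability space carrying a Brownian motion $\tilde W$; identifying $X$ with a weak solution of \eqref{sde1} reduces to showing
\begin{align*}
\mathbb{E}\int_0^t |b_n(s,X^n_s) - b(s,X_s)|\,ds \to 0,
\end{align*}
which I would handle by inserting an intermediate smooth $b_\varepsilon := b*\rho_\varepsilon$ and invoking the uniform Krylov estimate on both $b_n-b_\varepsilon$ and $b_\varepsilon-b$ (both in $\mathbb{L}^q_{{\bf p}}(T)$, so the matching with $(\hat q,\hat{{\bf p}}) = (q,{\bf p})$ is trivial); uniform continuity of $b_\varepsilon$ then yields convergence along the almost sure limit $X^n\to X$. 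The Krylov estimate for $X$ itself then follows by Fatou's lemma. I expect the principal obstacles to be (a) the justification of the generalized It\^o formula for the Sobolev-class solution $v_n$, which is standard but technically delicate, and (b) the index matching that converts the source hypothesis on $f$ into admissible PDE indices for Theorem~\ref{mp}; once these are handled, every remaining step is routine.
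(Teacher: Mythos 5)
Your proof takes a genuinely different route from the paper's. The paper constructs the weak solution directly via Girsanov's theorem applied to the drift-free SDE $\dif Y_t=\sigma(t,Y_t)\,\dif W_t$: the Krylov estimate for $Y$ is obtained from the PDE \emph{without drift} (so no index-matching constraint between source and drift arises), Khasminskii's lemma applied to $|b|^2$ yields exponential moments of the Girsanov density $\rho_T$, and the estimate is transferred to $X$ by H\"older's inequality with conjugate exponents $(\alpha,\beta)$, $\beta$ close to $1$. You instead build $X$ as a Skorohod limit of smoothed approximations $X^n$. Both are standard strategies, but the Girsanov route is leaner (no tightness/Skorohod/identification step) and, crucially, needs the PDE estimate only in the drift-free case, which sidesteps the obstruction below.

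There is a genuine gap in your step (i). To bound $v_n$ in $L^\infty$ you invoke Theorem \ref{mp} with source indices $(\hat q,\hat{\bf p})$ and drift $b_n$, which requires $\tilde q\geq\hat q$, $\tilde p_i\geq\hat p_i$, $2/\tilde q+\sum_i 1/\tilde p_i<1$, and --- for a constant uniform in $n$ --- a bound on $\|b_n\|_{\mathbb{L}^{\tilde q}_{\tilde{\bf p}}(T)}$ independent of $n$. The lemma allows $\hat p_i>p_i$ or $\hat q>q$ (the only constraint is $2/\hat q+\sum_i 1/\hat p_i<2$, much weaker than $2/q+\sum_i 1/p_i<1$), and in that range no admissible choice exists: you cannot take $\tilde p_i<\hat p_i$, while taking $\tilde p_i\geq\hat p_i>p_i$ puts $b$ outside $\mathbb{L}^{\tilde q}_{\tilde{\bf p}}(T)$. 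The mollifications $b_n$ do lie in $\mathbb{L}^{\tilde q}_{\tilde{\bf p}}(T)$, but their norms there are not controlled by $\|b\|_{\mathbb{L}^q_{\bf p}(T)}$ and diverge as $n\to\infty$, destroying the uniformity you need. The ``slack in the hypothesis on $b$'' you appeal to is slack in the scaling inequality, not in integrability, and does not buy higher exponents for $b$. Consequently your step (i), and hence the Fatou passage in step (iv), only proves \eqref{Kry} for $\hat p_i\leq p_i$, $\hat q\leq q$, not in the generality the lemma asserts. The clean repair is precisely the paper's maneuver: prove the Krylov estimate for the drift-free process $Y$, then transfer it to $X$ by Girsanov plus H\"older with $\beta>1$ close to $1$ so that $|f|^\beta$ still satisfies the drift-free admissibility condition.
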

\begin{proof}
	Firstly,  by \eqref{es2}, \eqref{es3} and following the same argument as in \cite[Theorem 2.1]{Zh1}, we can show that (\ref{Kry}) holds when $b\equiv0$. More precisely,
	for any $0<S<T<\infty$
and function
$f\in \mathbb{L}_{\bf \hat p}^{\hat q}(S,T)$ with $2/\hat q+1/\hat p_1+\cdots+1/\hat p_d<2$, there exists a constant $C(d,\hat{\bf p},\hat q)>0$ such that
	\begin{align}\label{kry}
	\mE\left(\int_S^T|f(t,Y_t)|\dif t\bigg|{\sF_S}\right)\leq C\|f\|_{\mathbb{L}_{\bf\hat p}^{\hat q}(S,T)},
	\end{align}
	where $Y_t$ solves the following SDE without drift:
	\begin{align*}
	\dif Y_t=\sigma(t,Y_t)\dif W_t,\quad Y_0=x\in\mR^d.
	\end{align*}
	Applying \eqref{kry} to $f=|b|^2$, we can get
	\begin{align*}
	\mE\left(\int_S^T|b(t,Y_t)|^2\dif t\bigg|{\sF_S}\right)\leq C\|b^2\|_{\mathbb{L}_{{\bf p}/2}^{q/2}(S,T)}=C\|b\|_{\mathbb{L}_{\bf p}^{q}(S,T)}^2.
	\end{align*}
	It then follows by Khasminskii's lemma (see \cite[Lemma 5.3]{Zh1})  that for  any constant $\kappa>0$,
	\begin{align}\label{ex}
	\mE\exp\bigg\{\kappa\int_0^T|b(s,Y_s)|^2\dif s\bigg\}\leq C(\kappa,d,{\bf p},q,\|b\|_{\mathbb{L}_{\bf p}^{q}(T)})<\infty.
	\end{align}
	As a result, we have
	\begin{align*}
	\mE\rho_T:=\mE\exp\bigg\{-\int_0^T\big[b^T\sigma^{-1}\big](s,Y_s)\dif W_s-\frac{1}{2}\int_0^T\big[b^T(\sigma\sigma^T)^{-1}b\big](s,Y_s)\dif s\bigg\}=1.
	\end{align*}
	Thus the existence of a weak solution $(X_t)_{t\geq0}$ to SDE (\ref{sde1}) follows by Girsanov's theorem. Furthermore, we can deduce that
	{\begin{align*}
	\mE\left(\int_0^T|f(s,X_s)|\dif s\right)&=\mE\left(\rho_T\int_0^T|f(t,Y_t)|\dif t\right)\\
	&\leq \left(\mE\int_0^T\!\rho_T^\alpha \dif t\right)^{1/\alpha}\left(\mE\int_S^T|f(t,Y_t)|^\beta\dif t\right)^{1/\beta},
	\end{align*}}
	where $\alpha,$ $\beta>1$ satisfying $1/\alpha+1/\beta=1$.
	Since
	\begin{align*}
	\mE\rho_T^\alpha=\mE&\bigg[\Big(\exp(-2\alpha\int_0^T [b^T\sigma]^{-1}(t,Y_t)\dif W_t-2\alpha^2\int_0^T[b^T(\sigma\sigma^T)^{-1}b](t,Y_t)\dif t)\Big)^{1/2}\\
	&\Big(\exp((4\alpha^2-\alpha)\int_0^T\big[b^T(\sigma\sigma^T)^{-1}b\big](t,Y_t)^2\dif t)\Big)^{1/2}\bigg],
	\end{align*}
	by H\"older's inequality, the fact that exponential martingale is a supermartingale,  \eqref{a} and  \eqref{ex}, we get for every $\alpha>1$, $\mE\rho_T^\alpha \leq C(\alpha,d,{\bf p},q,\|b\|_{\mathbb{L}_{\bf p}^{q}(T)})$.
	Then, it holds that
	\begin{align*}
	\mE\left(\int_0^T\!|f(t,X_t)|\dif t\right)&\leq C(\alpha,d,{\bf p},q,\|b\|_{\mathbb{L}_{\bf p}^{q}(T)},T)\left(\mE\int_0^T |f(t,Y_t)|^\beta\dif t\right)^{1/\beta}.
	\end{align*}
	Choosing $\beta$ close enough to $1$ such that  $2/\hat q+1/\hat p_1+\cdots+1/\hat p_d<2/\beta$ and taking $\bar{\bf p}=\hat{\bf p}/\beta$ , $\bar q=\hat q/\beta$ in (\ref{kry}), we can get
	\begin{align*}
	\mE\left(\int_0^T\!|f(t,X_t)|\dif t\right)&\leq  C\Vert f^\beta\Vert_{\mathbb{L}_{\bf \bar p}^{\bar q}(T)}^{1/\beta}=C\Vert f\Vert_{\mathbb{L}_{{\bf \hat p}}^{\hat q}(T)}.
	\end{align*}
	The proof is finished.
\end{proof}

Recall that  the Hardy-Littlewood maximal operator  $\cM$ is defined by
\begin{align*}
\mathcal{M}f(x):=\sup_{{\bf r}\in(0,\infty)^{d}}\frac{1}{|{\bf B_r}|}\int_{{\bf B_r}}\!f(x+y)\dif y,\quad \forall f\in L^1_{loc}(\mathbb{R}^d),
\end{align*}
where for ${\bf r}=(r_1,r_2,\cdot\cdot\cdot,r_d)$, ${\bf B_r}:=\left\{x\in\mathbb{R}^d:|x_1|<r_1,|x_2|<r_2,\cdot\cdot\cdot, |x_d|<r_d\right\}$.
For every $f\in C^\infty(\mR^d)$, it is known that there exists a constant $C_d>0$
such that for all $x,y\in \mR^d$ (see \cite[Lemma 2.1]{XXZZ}),
\begin{align}
|f(x)-f(y)|\leq C_d |x-y|\big(\cM|\nabla f|(x)+\cM|\nabla f|(y)\big),\label{ES2}
\end{align}
and the following $L^{\bf p}(\mathbb{R}^d)$-boundness of $\cM$ with ${\bf p}\in (1,\infty)^d$ holds (see \cite[Theorem 4.1]{HK}):
\begin{align}\label{GW1}
\|\cM f\|_{L^{\bf p}(\mathbb{R}^d)}\leq C_d\|f\|_{L^{\bf p}(\mathbb{R}^d)}.
\end{align}
Now, we are in the position to give:

\begin{proof}[Proof of Theorem \ref{main}]
We only need to show the pathwise uniqueness of solutions to SDE (\ref{sde1}). To this end,	we first assume that {\bf (H$a$)} holds, and  for $q\in(1,\infty)$ and ${\bf p}\in(1,\infty)^d$,
$$
|b|,|\nabla \sigma|\in L^q([0,T];L^{\bf p}(\mR^d))\quad \text{with}\quad\frac{2}{q}+\frac{1}{p_1}+\cdots+\frac{1}{p_d}<1.
$$
By Theorem \ref{mp}, there exists a function $u\in\mH_{2,{\bf p}}^q$ satisfying
	\begin{align*}
	\p_tu(t,x)+\sL^{a}_2u(t,x)+\sL^{b}_1u(t,x)+b(t,x)=0,\quad u(T,x)=0.
	\end{align*}
Define $\Phi(t,x):=x+u(t,x)$. In view of  (\ref{es3}),  we can choose $T$ small such that
	\begin{align}\label{dd}
	1/2<\|\nabla\Phi^{-1}\|_{\mL^\infty(T)}\leq 2.
	\end{align}
	Assume that  SDE (\ref{sde1}) admits two solutions $X_t^1$ and $X_t^2$. By the Krylov's estimate (\ref{Kry}), we can use It\^o's formula to get that the process $Y^i_t:=\Phi(t,X_t^i)$ satisfies
	$$
	\dif Y^i_t=\sigma(t,X^i_t)\nabla \Phi(t,X_t^i)\dif W_t=:\Psi(t,X_t^i)dW_t,\quad i=1,2.
	$$
	Let $Z_t:=X_t^1-X_t^2$, we have by (\ref{dd}) that
	\begin{align*}
	\mE|Z_t|^2\leq 2\mE|Y_t^1-Y_t^2|^2\leq2\mE\left(\int_0^t|Z_s|^2\dif A_s\right),
	\end{align*}
	where
	\begin{align*}
	A_t:=\int_0^t\frac{|\Psi(s,X_s^1)-\Psi(s,X_s^2)|^2}{|Z_s|^2}\dif s.
	\end{align*}
	Let $\rho_n$ be a family of mollifiers on $\mathbb{R}^d$, and define $\Psi^n(t,x):=\Psi(s,\cdot)\ast\rho^n(x)$.  Then we can write
	\begin{align*}
	\mE A_t&\leq \varliminf_{\epsilon\downarrow0}\mE \left(\int_0^t\frac{|\Psi(s,X_s^1)-\Psi(s,X_s^2)|^2}{|Z_s|^2}\cdot 1_{\{|Z_s|>\epsilon\}}\dif s\right)
	\\ &\leq 3\Big(\varliminf_{\epsilon\downarrow0}\lim_{n\rightarrow\infty}\mE \left(\int_0^t\frac{|\Psi^n(s,X_s^1)-\Psi(s,X_s^1)|^2}{|Z_s|^2}\cdot 1_{\{|Z_s|>\epsilon\}}\dif s\right)
	\\&\quad+\varliminf_{\epsilon\downarrow0}\lim_{n\rightarrow\infty}\mE \left(\int_0^t\frac{|\Psi^n(s,X_s^2)-\Psi(s,X_s^2)|^2}{|Z_s|^2}\cdot 1_{\{|Z_s|>\epsilon\}}\dif s\right)\\ &\quad+\varliminf_{\epsilon\downarrow0}\sup_{n\in\mathbb{N}}\mE \left(\int_0^t\frac{|\Psi^n(s,X_s^1)-\Psi^n(s,X_s^2)|^2}{|Z_s|^2}\cdot 1_{\{|Z_s|>\epsilon\}}\dif s\right)\\
	&=:3\big(\cI_1(t)+\cI_2(t)+\cI_3(t)\big).
	\end{align*}
	By  the property of mollification, it is easy to see that
	$$
	\cI_1(t)+\cI_2(t)\leq \varliminf_{\epsilon\downarrow0}\epsilon^{-2}\lim_{n\rightarrow\infty}C\Vert\Psi^n-\Psi \Vert_{\mathbb{L}^\infty(T)}^2=0.
	$$
	 As for the third term,   we can use (\ref{ES2}), the Krylov's estimate \eqref{Kry}  and (\ref{GW1}) to get that
	\begin{align*}
	\cI_3(t)&\leq C\sup_{n\in\mathbb{N}}\mE \left(\int_0^t\big[\mathcal{M}|\nabla\Psi^n|(s,X_s^1)+\mathcal{M}|\nabla\Psi^n|(s,X_s^2)|\big]^2\dif s\right)
	\\ &\leq C\sup_{n\in\mathbb{N}}\big\|\mathcal{M}|\nabla\Psi^n| \big\|_{\mathbb{L}_{\bf p}^q(T)}^2\leq C\Vert\nabla\Psi \Vert_{\mathbb{L}_{\bf p}^q(T)}^2<\infty.
	\end{align*}
	Hence, as a result of the stochastic Gronwall's inequality \cite[Lemma 3.7]{XZ2}, we can get $\mE|Z_t|^2=0$. The general case can be proved by a standard localization procedure as in \cite[Theorem 1.3]{Zh1}. The proof is finished.
\end{proof}

\end{document}